\newtheorem{theorem}{Theorem}
\newtheorem{corollary}[theorem]{Corollary}
\newtheorem{remark}[theorem]{Remark}
\renewcommand{\phi}{\varphi}
\let\epsilon=\varepsilon
\def\crn#1#2{{\vcenter{\vbox{
\hbox{\kern#2pt \vrule width.#2pt height#1pt
 }
\hrule height.#2pt}}}}
\newcounter{mnotecount}[section]
\let\oldmarginpar\marginpar
\renewcommand\marginpar[1]{\-\oldmarginpar[\raggedleft\footnotesize #1]%
{\raggedright\footnotesize #1}}
\begin{document}

\title[Introduction to RG-2 Flow]{A geometric introduction to the 2-loop renormalization group flow}
\author{Karsten Gimre}
\address[Gimre]{Department of Mathematics, Columbia University, New York City, New York}
\email{gimre@math.columbia.edu}
\author{Christine Guenther}
\address[Guenther]{Department of Mathematics and Computer Science, Pacific University,
Forest Grove, Oregon, 97116}
\email{guenther@pacificu.edu}
\author{James Isenberg}
\address[Isenberg]{Department of Mathematics, University of Oregon, Eugene, Oregon}
\email{isenberg@uoregon.edu}

\thanks{KG is partially supported by the NSF under grant DGE-1144155.}
\thanks{CG is partially supported by the Simons Foundation Collaboration Grant for Mathematicians - 283083}
\thanks{JI is partially supported by the NSF under grant PHY-1306441.}

\date{\today}

\begin{abstract}
The Ricci flow has been of fundamental importance in mathematics, most famously though its use as a tool for proving the Poincar\'e Conjecture and Thurston's Geometrization Conjecture. It has a parallel life in physics, arising as the first order approximation of  the Renormalization Group flow for the nonlinear sigma model of quantum field theory. There recently has been interest in the second order approximation of this flow, called the RG-2 flow, which mathematically appears as a natural nonlinear deformation of the Ricci flow. A  curvature flow arising from quantum field theory seems to us to capture the spirit of Yvonne Choquet Bruhat's extensive work in mathematical physics, and so in this commemorative article we give a geometric introduction to the RG-2 flow. 

A number of new results are presented as part of this narrative:  short-time existence and uniqueness results in all dimensions if the sectional curvatures $K_{ij}$ satisfy certain inequalities; the calculation of fixed points for $n = 3$  dimensions; a reformulation of constant curvature solutions in terms of the Lambert W function; a classification of the solutions that evolve only by homothety; an analog for RG-flow of the 2-dimensional Ricci flow solution known to mathematicians as the cigar soliton, and discussed in the physics literature as Witten's black hole. We conclude with a list of Open Problems whose resolutions  would substantially increase our understanding of the RG-2 flow  both physically and mathematically.
\end{abstract}

\maketitle

\section{Introduction}

Yvonne Choquet-Bruhat has made fundamental contributions to the study of both hyperbolic  PDEs  (notably her work in proving the well-posedness of the Cauchy problem for Einstein's gravitational field equations) and elliptic PDEs (we cite here her foundational work on the conformal method for studying the Einstein constraint equations). We (JI) have had the delightful privilege of knowing Yvonne for over 40 years, of working with her on a wide variety of projects over the last 35 years, and of counting her as a close friend. In the spirit of this collaboration and friendship, and with an eye on Yvonne's wonderful adventurousness, we offer here a brief survey  of a geometric heat flow of relatively recent interest which is related to Ricci flow, and has its roots in physics. 

During the 1960's, Gell-mann and Levy introduced a class of quantum field theories which they labeled ``nonlinear sigma models" \cite{GL}. These quantum field theories are based on maps  $\phi: \Sigma \rightarrow M$ (for $(\Sigma,\gamma)$ a Riemannian surface, and for $(M,g)$ a Riemannian manifold) and the corresponding harmonic map functional
\begin{equation} 
 S(\phi) = \int_\Sigma \frac{1}{\alpha} g_{ij} (\phi(x) ) \partial^\mu\phi^i(x)\partial^\nu \phi^j(x)\gamma_{\mu \nu} dx.
 \end{equation}
 ($\alpha$ is a positive constant.)
The analysis of the nonlinear sigma model quantum field theories involves the choice of certain ``cut-offs", and it is important for the plausibility of the theories as tools for physical prediction that their predictions not change under rescaling of the cutoffs. To achieve this, physicists generally deform the metric $g$ along with the deformation of the cutoff scale. Using $t$  to parametrize these deformations, physicists call the evolution of $g(t)$ the ``renormalization group flow." Renormalization is an essential concept in theoretical physics; see \cite{D} for a short  intuitive introduction, and see \cite{F2} for an overview of the physics of the renormalization group flow for the nonlinear sigma model. 
A comprehensive introduction can be found in the lecture notes  \cite{IAS},\cite{G}. Despite its importance in physics, the renormalization group flow does not appear in the literature as an explicit PDE system for the evolution $g(t)$ of the metric. Rather, what one finds are perturbative (in $\alpha$) expansions of the flow system (see \cite{F1}). The first order truncation produces the celebrated Ricci flow
\begin{equation}
\frac{\partial }{\partial t}g=-2Rc,
\end{equation}
where $Rc$ is the Ricci curvature tensor,  the trace of the full curvature tensor. (This in turn can be thought of as the collection of the Gaussian curvatures of 2-dimensional subspaces of the manifold.

The Ricci flow was introduced to the mathematics community in 1982 by Richard Hamilton, who laid out a program to use it to verify the Thurston's Geometrization Conjecture. Although his program was essentially completed by Perelman in 2003, the study  of Ricci flow  continues to be an extremely active area of mathematics \cite{C, Cetal, Cetala}, and its effectiveness as a tool for attacking important problems in geometry has been remarkable (see for example \cite{BW, BrSc}). Interestingly, one of Perelman's key results is the demonstration that the Ricci flow is a gradient flow, and he credits his energy functional formulation of this flow to ideas arising in the quantum field theory literature.

Physicists have used  the Ricci flow as an approximation of the full renormalization group flow. The question of whether or not this is a good approximation naturally arises, and one way to address this issue is to study the expansion truncated at the second term; one obtains the 2-loop renormalization group flow equation (which we label ``RG-2 flow"), which takes the following form:
 \begin{equation}
 \label{RG2flow}
\frac{\partial }{\partial t}g=-2Rc-\frac{\alpha }{2}Rm^{2}.
\end{equation}
Here, the quadratic Riemann term on the right hand side is given explicitly (in index form) by
\begin{equation}
\label{Rm2}
Rm^2_{ij} = Rm_{iklm} Rm_{jpqr} g^{kp} g^{lq} g^{mr}, 
\end{equation} 
so this flow involves the full curvature tensor.  We note that this $Rm^2$ term is  familiar to mathematicians, arising in the study of Einstein manifolds (see \cite{B}, Definition 1.131, and also \cite{Ber}). 

Only recently has there been much attention given to the mathematical nature of the RG-2 flow system and its solutions. This is partly because, unlike other well-studied geometric heat flows such as Ricci flow and mean curvature flow, the RG-2 flow is not a weakly parabolic system. It \emph{is} the case that RG-2 flow, like the others, is diffeomorphism-covariant in the sense that if $g(t)$ is a solution with initial condition $g(0) = g_0$, then $\phi^*g(t)$ is also a solution (since $Rc(\phi^*g) = \phi^*Rc(g)$ and $Rm^2(\phi^*g) = \phi^*Rm^2(g)$).
As a consequence of this feature, none of these flows are \emph{strictly} parabolic. For Ricci flow, either by restricting diffeomorphism freedom or by working with the diffeomorphically-related DeTurck flow, one can effectively turn the flow equations into a parabolic PDE system. This does not work for RG-2 flow. One sees this by noting that the imposition of a coordinate condition like ``harmonic coordinates", which results in Ricci flow appearing to take parabolic form, does not do the same for RG-2 flow. 

There are special cases in which RG-2 flow is weakly parabolic: Oliynyk shows in \cite{O} that in two dimensions the RG-2 flow is (weakly) parabolic assuming $1 +  \alpha K \ge 0$, where $K$ is the Gaussian curvature. For surfaces of  Euler characteristic $\chi < 0,$ he finds subspaces of the space of smooth Riemannian metrics that are invariant under the flow, and for which the flow stays parabolic (or, alternatively, backwards parabolic).  Cremaschi and Mantegazza prove the analogous parabolicity result in 3 dimensions (see \cite{CM}), where the curvature condition is $1+\alpha K_{ij} > 0$ for all sectional curvatures $K_{ij}.$ We have recently proved that an analogous result holds in all dimensions (see \cite{GGIa}): 

\begin{theorem}\label{ndim} Let $(M, g_0)$ be a closed $n$-dimensional Riemannian manifold. If $1+\alpha K_{ij} > 0$ for all sectional curvatures $K_{ij}$, then there exists a unique solution $g(t)$ of the initial value problem $\frac{\partial }{\partial t}g=-2Rc-\frac{\alpha }{2}Rm^{2}$,  $g(0) = g_0$,  on some time interval $[0,T).$ 
\end{theorem}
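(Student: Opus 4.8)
The plan is to adapt the DeTurck trick, with the essential new feature that strict parabolicity of the gauge-fixed system holds only because of the hypothesis $1 + \alpha K_{ij} > 0$. Fix the initial metric as a background, $\bar g = g_0$, with Levi-Civita connection $\bar\nabla$, and let $W^k = g^{ij}(\Gamma^k_{ij} - \bar\Gamma^k_{ij})$ be the usual DeTurck vector field built from the difference of connections. I would study the modified (RG-2-DeTurck) flow
\begin{equation}
\partial_t g = -2Rc - \tfrac{\alpha}{2} Rm^2 + \mathcal{L}_W g, \qquad g(0) = g_0.
\end{equation}
A preliminary observation justifies using the \emph{same} vector field $W$ as in the Ricci case: differentiating the covariance relation $Rm^2(\phi^* g) = \phi^* Rm^2(g)$ in the direction of a vector field $V$ gives $D(Rm^2)_g(\mathcal{L}_V g) = \mathcal{L}_V\bigl(Rm^2(g)\bigr)$, whose right-hand side is only first order in $V$. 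Hence the principal symbol of $D(Rm^2)$ annihilates the pure-gauge directions $\xi \odot V$, so the entire gauge degeneracy of the linearized RG-2 operator comes from the $Rc$ term and is removed by the standard DeTurck correction $\mathcal{L}_W g$.

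The heart of the argument is the computation of the principal symbol of the operator on the right-hand side. Because $Rm^2$ is quadratic in the curvature, and hence quadratic in the second derivatives of $g$, its linearization $D(Rm^2)(h)$ carries second derivatives of $h$ through a single linearized Riemann factor while the other factor enters as a zeroth-order coefficient equal to $Rm$ itself. The symbol of the modified operator, acting on a symmetric $2$-tensor $h$ in a cotangent direction $\xi \ne 0$, therefore has the schematic form
\begin{equation}
\sigma_\xi(h) = |\xi|^2 h - \tfrac{\alpha}{2}\,\bigl(Rm \ast \sigma_\xi[D(Rm)]\bigr)(h),
\end{equation}
a self-adjoint endomorphism of symmetric $2$-tensors whose coefficients involve $\bar g$, $\xi$, and the curvature tensor. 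I would diagonalize this endomorphism by choosing an orthonormal frame adapted to $\xi$ and decomposing $h$ into components relative to the resulting coordinate two-planes; the eigenvalues then come out, up to the factor $|\xi|^2$, as the numbers $1 + \alpha K_{ij}$ attached to those planes, together with the benign eigenvalue $1$ in the directions not paired with $\xi$. Thus $\sigma_\xi$ is positive definite for every $\xi \ne 0$ precisely when $1 + \alpha K_{ij} > 0$. This algebraic identification of symbol positivity with the sectional-curvature inequality is the step I expect to be the main obstacle: it requires careful bookkeeping of the curvature contractions and a well-chosen frame, and it is exactly here that the unconditional failure of parabolicity for RG-2 becomes visible and the hypothesis rescues it.

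Granting the symbol positivity, the modified flow is a fully nonlinear (since $Rm^2$ is quadratic in second derivatives) strictly parabolic system on the closed manifold $M$, so short-time existence and uniqueness of a smooth solution $\tilde g(t)$ on some interval $[0,T)$ follow from the standard theory of nonlinear parabolic systems: linearize, solve the uniformly parabolic linear problem, and close a fixed-point argument in suitable parabolic function spaces. By continuity the open condition $1 + \alpha K_{ij} > 0$ persists for small $t$, so parabolicity is maintained. I would then undo the gauge by solving the ODE $\partial_t \phi_t = -W_{\tilde g(t)}\circ \phi_t$ with $\phi_0 = \mathrm{id}$, obtaining a smooth family of diffeomorphisms, and setting $g(t) = \phi_t^* \tilde g(t)$. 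The covariance identities $Rc(\phi^* g) = \phi^* Rc(g)$ and $Rm^2(\phi^* g) = \phi^* Rm^2(g)$ noted above then show by a direct computation that $g(t)$ solves the original RG-2 flow with $g(0) = g_0$. For uniqueness of the unmodified flow, I would run the gauge-fixing argument in reverse: given any RG-2 solution, solve the associated harmonic-map-type heat flow into the background to convert it into a solution of the strictly parabolic DeTurck equation, invoke uniqueness there, and pull back to conclude that any two RG-2 solutions with the same initial data agree on their common interval of existence.
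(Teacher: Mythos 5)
A point of reference first: this survey does not actually prove Theorem \ref{ndim} --- it is quoted from the preprint \cite{GGIa} (with the three-dimensional case in \cite{CM} and the two-dimensional case in \cite{O}), so there is no in-paper argument to compare yours against. That said, your strategy is precisely the one those papers follow: DeTurck gauge-fixing with the usual vector field $W$, a computation showing that the principal symbol of the gauge-fixed linearization is positive definite exactly under the hypothesis $1+\alpha K_{ij}>0$, short-time existence and uniqueness for the strictly parabolic modified flow, and then the pull-back and reverse-DeTurck (harmonic-map heat flow) arguments to transfer existence and uniqueness to RG-2 itself. Your preliminary observation that $\sigma_\xi[D(Rm^2)]$ annihilates the pure-gauge directions $\xi\odot V$, because $D(Rm^2)_g(\mathcal{L}_V g)=\mathcal{L}_V(Rm^2(g))$ is only first order in $V$, is correct and is the right way to see that the unmodified DeTurck term already removes all of the gauge degeneracy.

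The gap is the one you flag yourself, and it is not a side issue: the diagonalization of the symbol and the identification of its eigenvalues with quantities of the form $|\xi|^2(1+\alpha K_{ij})$ (in a frame adapted to $\xi$ and chosen to diagonalize the bilinear form $R(\cdot,\xi^\sharp,\cdot,\xi^\sharp)$) is the entire mathematical content of the theorem; everything else is Ricci-flow boilerplate. As written, your argument asserts the conclusion of that computation rather than performing it, so it establishes only that the theorem would follow from the symbol identity, not the theorem. Two further points need more than an appeal to ``standard theory.'' First, because $Rm^2$ is quadratic in $\partial^2 g$, the gauge-fixed system is fully nonlinear rather than quasilinear, so the existence theorems one cites for Ricci--DeTurck flow do not apply verbatim; one must linearize about $g_0$ and run an inverse-function-theorem or maximal-regularity argument in parabolic H\"older spaces, which is exactly how \cite{CM} and \cite{GGIa} proceed. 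Second, uniqueness for that fully nonlinear gauge-fixed system --- which your reverse-DeTurck step silently invokes --- has to come out of the same framework, not out of the usual quasilinear energy estimates. None of these obstacles is fatal, but until the symbol computation is actually carried out the proof is a plan rather than a proof.
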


A superficial inspection of the RG-2 flow equation \eqref{RG2flow}-\eqref{Rm2} suggests that the second term is important for a given solution if and only if the norm of the curvature approaches the value $\frac{2}{\alpha}$. The results noted above concerning weak parabolicity support this contention. In our discussion of various special solutions below, we note further concrete evidence for it, in the form of families of solutions (defined by isometries) for which a portion of the RG-2 solutions (those with positive curvature, or curvature not too negative with respect to $\frac{2}{\alpha}$) behave very similarly to solutions of Ricci flow, while a complementary portion of the RG-2 solutions (those with large negative curvature) behave qualitatively very differently.

The relative size of the curvature  and  $\frac{2}{\alpha}$ also determines whether the evolution of the volume element $d \mu$ is similar to or different from that of the Ricci flow.  The volume element evolves according to 
\begin{equation}
\label{volevoln}
\frac{\partial }{\partial t}d\mu =(-R-\frac{\alpha }{4}\left\vert
Rm\right\vert ^{2})d\mu ,
\end{equation}%
where $R$ is the scalar curvature (defined to be the trace of the Ricci curvature tensor).  If $R > 0,$ the volume decreases. In the $R < 0$ case, one can have $\frac{\partial}{\partial t}{d\mu}$ positive or negative (in contrast to the Ricci flow),  depending on the relative size of $\frac{2}{\alpha}$ and the curvature.  
Note that one can use \eqref{volevoln} to construct a volume-normalized version of RG-2 flow: 
\begin{equation}
\frac{\partial }{\partial t}g=-2Rc-\frac{\alpha }{2}Rm^{2} + \frac{1}{n} \frac{\int R + \frac{\alpha}{4}|Rm|^2 d\mu }{\int d\mu}.
\end{equation}
There do not appear to be any studies of this volume-normalized flow in the literature.

For a relatively new and relatively unstudied geometric heat flow system like RG-2 flow, one of the more useful ways to begin to understand it is to examine special classes of solutions. We do that here, starting in Section \ref{FixedPoints} with fixed point solutions. We proceed in Section \ref{ConstCurv} to review results concerning solutions of the RG-2 flow which have constant curvature. In Section \ref{Solitons} we discuss RG-2 solitons, and in Section \ref{LocHomog} we review results concerning RG-2 flow restricted to the class of locally homogeneous geometries. We conclude this work (see Section \ref{Open}) by stating a number of outstanding open problems concerning RG-2 flow and its solutions.

\section{Fixed Points of RG-2 Flow}  
\label{FixedPoints}

The fixed points of any dynamical system are those solutions which do not change in time; hence for RG-2 flow they are those geometries for which the right hand side of \eqref{RG2flow} vanishes: 
\begin{equation}
\label{fixpts}
Rc[g] =-\frac{\alpha}{4}Rm^2[g].
\end{equation}
Not surprisingly, in view of this equation, the fixed points which are known are metrics with negative Ricci curvature.

In dimension two, equation \eqref{fixpts} reduces to the condition that the curvature be constant and negative. Such geometries---the hyperbolic surfaces---are well understood. In dimension three, the analysis is a bit more involved, but we are still able to determine the complete set of RG-2 fixed points. The starting point for this analysis is to recall that in three dimensions, the Riemann curvature tensor can be written strictly in terms of the Ricci tensor. It follows that the RG-2 fixed point equation \eqref{fixpts} can be rewritten (in index form) as follows:
\begin{equation}
\label{3dfixpt}
R_{ij}=-\frac{\alpha}{2}(-R_i^s R_{sj} + R R_{ij} + |Rc|^2 g_{ij} - \frac{R^2}{2} g_{ij}).
\end{equation}

Since the Ricci curvature tensor $R_{ij}$ is a symmetric tensor, one can diagonalize it at a point, in terms of an orthonormal basis. Using  $(\lambda, \nu, \mu)$ to label the eigenvalues of the Ricci tensor under this diagonalisation, one can rewrite the fixed point condition \eqref{3dfixpt} as a system of three (quadratic) algebraic equations for these eigenvalues.:
\begin{align}
-2\lambda+ \frac{\alpha}{2}(\lambda + \mu + \nu)^2-\alpha \lambda (\lambda + \mu + \nu)-\alpha(\lambda^2+\mu^2+\nu^2)+\alpha\lambda^2= 0\\
-2\mu+ \frac{\alpha}{2}(\lambda + \mu + \nu)^2-\alpha \mu(\lambda + \mu + \nu) -\alpha(\lambda^2+\mu^2+\nu^2)+\alpha \mu^2 = 0 \\
 -2\nu + \frac{\alpha}{2}(\lambda + \mu + \nu)^2-\alpha \nu(\lambda + \mu + \nu) -\alpha(\lambda^2+\mu^2+\nu^2)+\alpha \nu^2 = 0. 
\end{align}
After some calculation, one finds that these equations admit the following four distinct solutions, and no others:
\begin{align*}
&1) \ \ \lambda = \mu = \nu = 0,\\
&2)\ \  \lambda = \mu = \nu = -4/\alpha  \\
&3)\ \  \{\lambda, \mu, \nu\} =\{-2/\alpha, -2/\alpha, \ 0\} \\
&4)\ \ \{\lambda, \mu, \nu\} = \{-4/\alpha, -2/\alpha, -2/\alpha\}.
\end{align*}

Each of these four solutions for $(\lambda, \nu, \mu)$ specifies a set of constant values for the three Ricci tensor eigenvalues. Since the Ricci curvature tensor diagonalisation can be done at any point, each of these consequently specifies a geometry (or set of geometries)  of  specified constant Ricci curvature. Hence we obtain four distinct (sets of) fixed point geometries for RG-2 flow. We contrast this with the set of fixed points for Ricci flow, which (in all dimensions) comprise the Ricci flat geometries only.

Constant Ricci curvature geometries are known as ``Einstein spaces". In three dimensions, since the Ricci curvature controls the full geometry, they are ``constant curvature homogeneous spaces". Noting that in three dimensions,  the eigenvalues of the sectional curvatures are given by $\frac{1}{2} ( \mu + \nu - \lambda,  \nu + \lambda -\mu,  \nu + \lambda -\mu), $ we readily determine that the fixed points 1), 2), and 3) correspond to $\mathbb R^3,$ $\mathbb H^3$, and $\mathbb H^2 \times \mathbb R.$ The fixed point described by $\{\lambda, \mu, \nu\} = \{-4/\alpha, -2/\alpha, -2/\alpha\}$ is less understood. We note in particular that it follows from Proposition 5.1 in \cite{KN}\footnote{Proposition 5.1: A locally homogeneous Riemannian 3-manifold with principal Ricci curvatures $\lambda, \mu, \nu$ exists if the curvatures satisfy at least one of the following (partially overlapping) three sets of conditions: i) All are equal, or two are equal and the last is zero; ii) $\lambda\mu\nu>0$, or at least two of them are zero; iii)all are non-positive, at most one is zero, and up to re-numeration satisfy the inequalities $2\lambda<\mu+\nu,$ $\lambda(\mu+\nu)\le \mu^2+\nu^2$.} that geometries satisfying this condition cannot be locally homogeneous.

Are these fixed points of RG-2 flow stable? Results in \cite{GO} show that at least some of them are. In that work, maximal regularity techniques are used to show that, as is the case for Ricci flow, the flat torus (fixed point 1) is stable, and in particular there exists an exponentially attractive center manifold composed entirely of flat metrics. Also in that paper, it is shown that given {\it any} hyperbolic space of any constant curvature $K < 0$, one can modify the RG-2 flow by scaling so that the space is a fixed point of the modified flow, and one can show stability if $\alpha K$ is not too negative.

\begin{remark} As noted previously, to be physically meaningful one requires $\alpha>0$. However, mathematically one can assume $\alpha <0$. From \cite{KN}, one sees that the fixed point $\{-4/\alpha, -2/\alpha, -2/\alpha\}$ in the $\alpha<0$ case is indeed locally homogeneous, and one also recovers $\mathbb S^3$ and $\mathbb S^2 \times \mathbb R$ as fixed points under this assumption. \end{remark}

\section{Constant Curvature Solutions} 
\label{ConstCurv}

Among the simplest collection of solutions to the RG-2 flow, besides the fixed points, are those with constant curvature. We recall that it follows from diffeomorphism invariance and the uniqueness of solutions for the Ricci flow initial value problem that isometries of the initial metric $g_0$ are preserved along Ricci flow solutions $g(t)$. For RG-2 flow, diffeomorphism invariance holds, but uniqueness has not been generally proven (see Theorem \ref{ndim} for a special case where it is known). Hence, there is no proof that isometries are preserved along RG-2 flow. We can, however, focus on those constant curvature solutions such that indeed the constant curvature condition is preserved. This leads us to consider solutions of the form 
\begin{equation}
\label{constcurvat}
g(t)=\phi(t)g_K,
\end{equation}
where $g_K$ denotes a fixed constant curvature metric (of curvature $K$). 

As is shown in \cite{GGI}, the RG-2 flow for evolving metrics of the form \eqref{constcurvat} implies that
$\phi(t)$ must satisfy the equation 
\begin{equation}
\label{evolvephi}
\phi'(t) = -2K(n-1) - \frac{\alpha}{\phi(t)} K^2 (n-1).
\end{equation} 
One readily determines that this ODE is solved implicitly by functions $\phi(t)$ which satisfy 
\begin{equation}
\label{imphi}
\phi(t) = -2K(n-1)t + 1 + \frac{\alpha K }{2} \ln \Big |\frac{2 \phi(t) + \alpha K}{2 + \alpha K}\Big |,
\end{equation}
where $n$ denotes the dimension of the space. 

Using this implicit expression, one may seek times $T$ such that $\phi(T)$ vanishes: one verifies that for  $K$ not equal to either zero or $-\frac{2}{\alpha}$ (these values correspond to RG-2 fixed points, as discussed above), 
one has $\phi(T) = 0$ for 
\begin{equation}
\label{Tvalues}
T = \frac{1}{2K(n-1)}+\frac{\alpha}{4(n-1)}\ln \Big | \frac{\alpha K}{2+\alpha K} \Big|.
\end{equation}
For those values of $K$ for which this expression for $T$ is a positive real (finite) number, we see that the corresponding constant curvature RG-2 solution becomes singular in finite time. To survey what in fact holds for various choices of $K$, it is useful to note that the implicit solutions $\phi(t)$ of \eqref{imphi} are given by  the ``Lambert W function",  denoted by $W$, which is defined to be the (multivalued) inverse of the map $f(W(z))= W(z)e^W(z)$, with $W_0$ and $W_{-1}$ denoting specific branch choices (see for example \cite{CG}). We now list the behavior of the RG-2 flow for the various choices of the constant curvature $K$:
\begin{enumerate}
 
\item  $K = 0$: This is a fixed point in all dimensions.

\item  $K > 0$:  The constant curvature sphere collapses to a point in finite time, with the time to extinction (given by the value of $T$ in \eqref{Tvalues}, which for $K>0$ is positive real) less than the corresponding collapse time for spheres evolving under Ricci flow. As an explicit example,  if $n = 3$, $\alpha = 1$ and $K = 1,$ one has the solution $\phi(t) = -1/2 - 1/2 W_{-1}( -3e^{8t-3})
.$

\item $K<0$, and $2+\alpha K < 0$: Constant negative curvature spaces, with curvature very negative, collapse to a point in finite time $T$.  For example letting $n = 3$, $\alpha = 1$ and $ K = -6,$ one has the function $\phi(t) = 3+3 W_0(2/3 e^{i \pi + 8t - 2/3})$, which is  real until the extinction time $T$ (given by \eqref{Tvalues} above). This behavior of  hyperbolic geometries under RG-2 flow differs from that under Ricci flow, in that the latter continues to expand for all time.

\item $K<0$, and $2 + \alpha K > 0$: For constant negative  curvature spaces with small curvature, the RG-2 flow geometry initially expands, and in fact continues to expand for all time, just as is the case under Ricci flow.  For example, if $n = 3$$\alpha = 1$ and $K = -1/2$ one has the solution $\phi(t) = 1/4 + 1/4 W_0(3e^{8t+3}),$ which behaves monotonically.  

\item  $K < 0$ and $2 + \alpha K = 0.$ This is a fixed point in all dimensions. In dimension 3, this corresponds to the case $Rc = (-4/\alpha,-4/\alpha,-4/\alpha)$, since $Rc = K(n-1) g$. 

\end{enumerate}

Note that for any fixed value of the coupling constant $\alpha$, if one chooses the value of $K$ to be sufficiently negative,  the behavior under the RG-2 flow differs from that under Ricci flow.

\section{Solitons}
\label{Solitons}

Fixed point solutions of RG-2 flow do not evolve, and the constant curvature solutions discussed above evolve only by scaling.  More generally, solutions that evolve only by scaling and diffeomorphism (i.e., those which evolve self-similarly),  are called {\it solitons}. For $\phi'(0) < 0$ they are called {\it shrinking solitons}, for $\phi'(0) > 0$ they are {\it expanding solitons}, and for $\phi'(0) = 0$, they are called {\it steady solitons}. In the study of Ricci flow and other geometric heat flows, 
solitons have proven to be very useful both for modeling the behavior of singularities and for modeling long-time (non-singular) behavior of solutions of the flows. 
Indeed, a commonly used  technique for analyzing and classifying  the geometry of singularities in Ricci flow solutions  is to generate a sequence of flow solutions  by scaling in space and time about the singularity, and then studying the limits of these sequences and relating them to solitons. 
Solitons have also been used to find so-called Harnack inequalities, which can be very helpful in comparing 
solution values at different times. Harnack inequalities are generally sharp for solitons.  

While RG-2 solitons have only recently been considered, and much is unknown concerning them, we have learned some facts about them, as surveyed here.

\subsection{Solitons with No Diffeomorphism Action}

Since the RG-2 equation is not scale-invariant, the possible soliton solutions with $\sigma(t)$ not constant and with no evolution by diffeomorphism---i.e., those for which  $g(t) = \sigma(t) g_0$ for some specified initial metric\footnote{We emphasize that here $g_0$ is a general initial metric, not necessarily one of constant curvature as in Section \ref{ConstCurv}.} ---are very restricted. We express this in the following classification theorem:

\begin{theorem}
\label{SolNoDiff}
Let $(M, g_0)$ be a closed Riemannian manifold. If $g(t)$ is a solution of the RG-2 flow such that $g(t) = \sigma(t) g_0$, with $\sigma(t) > 0$ not constant, then  $(M, g_0)$ is an Einstein manifold and $Rm^2 = \frac{1}{n} |Rm|^2 g$.
\end{theorem}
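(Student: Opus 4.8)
The plan is to convert the flow PDE into a single tensorial ODE by exploiting how the curvature quantities scale under a constant conformal factor, and then to extract both conclusions from the hypothesis that $\sigma$ is non-constant. First I would record the scaling behavior under $g = \sigma g_0$ with $\sigma$ a positive constant: since the Christoffel symbols, and hence the $(1,3)$ Riemann tensor, are unchanged by a constant rescaling, the $(0,2)$ Ricci tensor satisfies $Rc(\sigma g_0) = Rc(g_0)$, while the $(0,4)$ Riemann tensor scales by $\sigma$ and the three inverse-metric contractions in \eqref{Rm2} contribute $\sigma^{-3}$, so that $Rm^2(\sigma g_0) = \sigma^{-1} Rm^2(g_0)$. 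Substituting $g(t) = \sigma(t) g_0$ into \eqref{RG2flow} and using $\partial_t g = \sigma'(t) g_0$ then yields the tensorial identity
\begin{equation*}
\sigma'(t)\, g_0 = -2\, Rc(g_0) - \frac{\alpha}{2\sigma(t)}\, Rm^2(g_0),
\tag{$\ast$}
\end{equation*}
which must hold at every point of $M$ and for every $t$ in the interval of existence; here the tensors $Rc(g_0)$ and $Rm^2(g_0)$ are fixed (independent of $t$).

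The next step is to use the non-constancy of $\sigma$ to decouple the two fixed tensors. Since $\sigma$ is smooth and non-constant, there exist times $t_1 \neq t_2$ with $\sigma(t_1) \neq \sigma(t_2)$. Evaluating $(\ast)$ at $t_1$ and $t_2$ and subtracting eliminates $Rc(g_0)$ and gives
\[
\big(\sigma'(t_1) - \sigma'(t_2)\big)\, g_0 = -\frac{\alpha}{2}\Big(\tfrac{1}{\sigma(t_1)} - \tfrac{1}{\sigma(t_2)}\Big)\, Rm^2(g_0).
\]
Because the coefficient of $Rm^2(g_0)$ is nonzero, this forces $Rm^2(g_0) = c\, g_0$ for the constant
$c = -2\big(\sigma'(t_1)-\sigma'(t_2)\big)\big/\big[\alpha\big(\tfrac{1}{\sigma(t_1)}-\tfrac{1}{\sigma(t_2)}\big)\big]$; note that $c$ is a genuine global constant, since it is assembled from the values of $\sigma$ and $\sigma'$ at two fixed times. (One could equivalently differentiate $(\ast)$ in $t$ and evaluate at a point where $\sigma' \neq 0$.)

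To finish, I would take the $g_0$-trace of $Rm^2(g_0) = c\, g_0$. Since $\Tr_{g_0} Rm^2 = |Rm|^2$ by \eqref{Rm2}, this gives $|Rm|^2 = c\,n$, so $|Rm|^2$ is constant and $Rm^2 = \frac{1}{n}|Rm|^2\, g_0$, the second claimed identity. Substituting $Rm^2(g_0) = c\, g_0$ back into $(\ast)$ then yields $Rc(g_0) = \lambda(t)\, g_0$ with $\lambda(t) = -\tfrac{1}{2}\big(\sigma'(t) + \tfrac{\alpha c}{2\sigma(t)}\big)$; since the left-hand side and $g_0$ are independent of $t$ while $g_0 \neq 0$, the scalar $\lambda(t)$ must in fact be a constant $\lambda$, whence $Rc(g_0) = \lambda\, g_0$ and $(M, g_0)$ is Einstein.

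The computation itself is short, and I expect the only real care to lie in the scaling step: correctly tracking that $Rc$ is scale-invariant while $Rm^2$ carries a single factor of $\sigma^{-1}$ is exactly what makes $(\ast)$ a balance between two distinct time-dependent scalings ($\sigma'$ and $\sigma^{-1}$) of two fixed tensors. The conceptual crux—and the precise point at which the hypothesis that $\sigma$ is \emph{not} constant is used—is that this non-constancy forces the single tensor equation $(\ast)$ to hold for two independent values of the pair $(\sigma',\sigma^{-1})$, which is what lets me read off that each of $Rc(g_0)$ and $Rm^2(g_0)$ is proportional to $g_0$; were $\sigma$ constant, $g(t)$ would be stationary and no such rigidity would follow.
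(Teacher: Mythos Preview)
Your proof is correct. Both arguments begin identically, reducing the flow to the tensor identity $\sigma'(t)\,g_0 = -2\,Rc(g_0) - \dfrac{\alpha}{2\sigma(t)}\,Rm^2(g_0)$ via the scaling behavior of $Rc$ and $Rm^2$. From there the paper works pointwise: it simultaneously diagonalizes $Rc$ and $Rm^2$ in an orthonormal frame, subtracts pairs of eigenvalue equations, and uses the non-constancy of $\sigma$ to force all eigenvalues to coincide; this yields $Rc = f_1(p)\,g$ and $Rm^2 = f_2(p)\,g$ with \emph{a priori} point-dependent factors, after which the paper invokes Schur's Lemma to upgrade $f_1$ to a constant and conclude that $(M,g_0)$ is Einstein. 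Your route is more direct: subtracting the full tensor identity at two times with $\sigma(t_1)\neq\sigma(t_2)$ gives $Rm^2(g_0)=c\,g_0$ with $c$ a genuine global constant (built only from values of $\sigma,\sigma'$), and substituting back then forces $Rc(g_0)=\lambda\,g_0$ with $\lambda$ automatically constant over $M$. The paper's approach makes the pointwise linear-algebraic rigidity explicit but carries the extra appeal to Schur (which, strictly speaking, requires $n\ge 3$); your argument is shorter, avoids Schur entirely, and works uniformly in all dimensions.
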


\begin{proof}
Calculating the right hand side of the RG-2 flow equation for metrics of the form $g(t) = \sigma(t) g_0$, we obtain
\begin{equation}
-2Rc(g(t)) - \alpha Rm^2(g(t)) = -2 Rc(g_0) - \frac{\alpha}{\sigma(t)} Rm^2(g_0).
\end{equation}
It then follows  that $g(t)$ is an RG-2 flow solution if and only if $\sigma(t)$  and $g_0$ satisfy the following tensor equation:
\begin{equation}
\label{sigmaeqn}
-2 Rc(g_0) - \frac{\alpha}{\sigma(t)} Rm^2(g_0) = \sigma'(t) g_0.
\end{equation}

Since the tensors $Rc$ and $Rm^2$ are both symmetric, we can diagonalize the left hand side of \eqref{sigmaeqn} with respect to an orthonormal basis. Labeling the eigenvalues (with respect to this diagonalization) of $Rc$ as $(a_1, a_2, a_3)$ and those of $Rm^2$ as $(b_1, b_2, b_3)$, we obtain the system
\begin{align}
& \ \  a_1+  \frac{\alpha}{\sigma(t)} b_1= \sigma'(t).\\
&\ \   a_2+  \frac{\alpha}{\sigma(t)} b_2= \sigma'(t)  \\
&\ \  a_3 +  \frac{\alpha}{\sigma(t)} b_3= \sigma'(t).
\end{align}
If we now subtract, for example, the first two equations, we have
$$ (a_1 - a_2) = (b_2 - b_1) \frac{\alpha}{\sigma(t)}.$$ 
If $\sigma(t) > 0$ is not constant, we must have $a_1 = a_2$ and $b_1 = b_2.$ Arguing similarly, we determine  that $a_1 = a_2 = a_3$, and $b_1 = b_2 = b_3.$ 

Thus $Rc = f_1(p) g$, for some function $f_1:M \rightarrow \mathbb R$, and it is then a consequence of Schur's Lemma\footnote{See \cite{P}. Schur's Lemma: On a Riemannian manifold of dimesions $n \ge 3$, if $Rc(v) = (n-1) f(p) v$ for all $v \in T_pM, p\in M$, then the metric is Einstein.} that the metric is Einstein.
We also have $Rm^2_{ij} = f_2(p) g_{ij}$ for some function $f_2:M \rightarrow \mathbb R$. Taking the trace of both sides yields $|Rm|^2 = n f_2(p)$; hence $Rm^2_{ij} = \frac{1}{n}|Rm|^2g_{ij}.$

\end{proof}

\begin{remark}
For Einstein manifolds in dimension 4, it is always true that $Rm^2 =\frac{1}{4} |Rm|^2 g$  (see \cite{B}, Remark 1.133). For  higher dimensions,   the condition holds if and only if the metric is critical for the functional $$S[Rm(g)] = \int_M |Rm|^2 d\mu$$
(see Corollary 4.72 in \cite{B}).
\end{remark}
\begin{remark}
We note that if one takes the divergence of both sides of the equality $Rm^2_{ij} =\frac{1}{n} |Rm|^2 g_{ij}$, one obtains the Bianchi-type identity 
 $$n \ div(Rm^2)_j = \nabla_j (tr Rm^2).$$
\end{remark}

Since a three-dimensional Riemannian manifold $(M,g)$ is Einstein if and only if it has constant curvature, we obtain the following as an immediate consequence of Theorem \ref{SolNoDiff}.

\begin{corollary}
Let $(M, g_0)$ be a closed 3-dimensional Riemannian manifold. Then there exists a solution  $g(t)$ of the RG-2 flow such that $g(t) = \sigma(t) g_0$, with $\sigma(t) > 0$ not constant, if and only if $M$ has constant curvature.
\end{corollary}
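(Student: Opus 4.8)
The plan is to obtain both implications directly from Theorem~\ref{SolNoDiff}, combined with the defining feature of three dimensions: a Riemannian $3$-manifold is Einstein if and only if it has constant sectional curvature. This holds because the Weyl tensor vanishes identically in dimension three, so the Ricci tensor determines the full curvature tensor, and Schur's Lemma forces the common Ricci eigenvalue---hence the sectional curvature---to be constant. With this equivalence in hand, the corollary is genuinely a two-line consequence of the theorem.

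For the forward implication, suppose $g(t) = \sigma(t) g_0$ is an RG-2 solution with $\sigma(t) > 0$ nonconstant. Since Theorem~\ref{SolNoDiff} is stated for closed Riemannian manifolds of arbitrary dimension, it applies verbatim and yields that $(M, g_0)$ is Einstein. In dimension three this is precisely the statement that $g_0$ has constant curvature, which is what we want; the auxiliary conclusion $Rm^2 = \frac{1}{n}|Rm|^2 g$ of the theorem carries no new information here, as it holds automatically for any constant-curvature metric. This direction is therefore immediate.

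For the converse, suppose $(M, g_0)$ has constant curvature $K$. Then $g_0$ lies in the family of metrics $g_K$ analyzed in Section~\ref{ConstCurv}, and the ansatz $g(t) = \phi(t) g_0$ reduces the RG-2 flow to the scalar ODE \eqref{evolvephi}. Choosing an initial scale $\phi(0)$ that is not the stationary value of this ODE produces a genuinely nonconstant scale factor $\sigma(t) = \phi(t)$, with existence on a nontrivial time interval guaranteed by the implicit solution \eqref{imphi}; this exhibits the required solution.

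I expect the only delicate point to lie in the converse, at the degenerate curvature values. If $K = 0$ the metric $g_0$ is flat and every rescaling $\sigma(t) g_0$ remains flat, hence stationary under RG-2 flow, so no nonconstant $\sigma$ can exist; and at $K = -2/\alpha$ the scale $\phi \equiv 1$ is stationary even though nearby initial scales are not. Consequently the converse is clean exactly for nonflat constant curvature, where the ODE \eqref{evolvephi} visibly admits nonconstant solutions, and one should either exclude the flat case or read the hypothesis as permitting a nonstationary initial scale. The forward implication, by contrast, needs no such care and is an immediate consequence of Theorem~\ref{SolNoDiff}.
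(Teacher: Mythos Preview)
Your forward direction is exactly the paper's argument: the paper simply remarks that a three-dimensional Einstein manifold has constant curvature and declares the corollary an immediate consequence of Theorem~\ref{SolNoDiff}. The paper does not spell out the converse at all; you correctly supply it by invoking the constant-curvature ODE~\eqref{evolvephi} from Section~\ref{ConstCurv}.

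Your caveat about $K=0$ is well taken and goes beyond what the paper addresses. A flat metric has $Rc=0$ and $Rm^2=0$, so $g(t)=\sigma(t)g_0$ solves the RG-2 flow only if $\sigma'(t)g_0=0$, forcing $\sigma$ constant; thus the ``if'' direction of the corollary as literally stated fails for flat $g_0$. The paper does not note this, so your observation is a genuine refinement rather than a gap in your argument. Your remark about $K=-2/\alpha$ is also correct but, as you say, harmless: one simply starts at $\phi(0)\neq 1$.
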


\subsection{Steady Gradient Solitons}

A soliton that evolves by scaling and also by diffeomorphism has the form $$g(t) = \sigma(t) \phi(t)^*g_0,$$ 
from which it follows that 
$$\frac{\partial}{\partial t} g = \sigma'(t)  \phi^*g_0 + \sigma(t) \phi(t)^*(L_X g_0),$$ where $X(t)$ is the vector field such that $\frac{d}{dt} \phi(t)(p) = X(\phi(t)(p)$ for every point $p \in M$.  If $X = \nabla f$ for some function $f$, then the solution is called a {\it gradient soliton}. 

\subsubsection{Reduction to a Condition on Initial Data:}

For a steady gradient soliton, it follows from the above considerations together with the steady soliton condition $\sigma(t)=1$ that at time
$t = 0$, the metric $g_0=g(0)$ and the function $f_0=f(0)$ must satisfy 
\begin{equation}
\label{gradsoliton}
Rc(g_0) + \frac{\alpha}{2} Rm^2(g_0) + \nabla^{g_0}  \nabla^{g_0}  f_0 = 0.
\end{equation}
One would like to show that, as is the case for Ricci flow, if a metric $g_0$ on $M$ and  a function $f_0:M\rightarrow R$ satisfy  (\ref{gradsoliton}), then there exists a steady gradient soliton $(M,g(t),f(t))$ with initial data $(M,g_0,f_0)$. 

The proof of this result for RG-2 flow is much like that for Ricci flow (see for example Ch. 4, \cite{CLN}). We first define $Y = \nabla_{g_0} f_0$, and set $\psi(t)$ to be the 1-parameter family of diffeomorphisms generated by 
$Y.$ We then claim that $(M, g(t) =\psi(t)^*g_0, h(t)  = \psi(t)^* (f_0))$ is a steady gradient soliton for RG-2 flow.
To see this, we calculate (for the left hand side of the RG-2 flow equation), at an arbitrary time $t_0$,
$$\frac{\partial g(t)}{\partial t}|_{t=t_0} = \frac{\partial}{\partial t}|_{t=t_0}\psi(t)^*g_0.$$
Since, by definition, $\frac{\partial}{\partial t}|_{t=t_0} \psi(t) =  \nabla_{g_0} f_0 
= \psi(t_0)_*(\nabla_{g(t_0)}h(t_0)),$ 
we obtain  (rewriting $t_0$ as $t$), 
 $$\frac{\partial}{\partial t}g(t) =  L_{\nabla_{g(t)}h(t)} g(t). $$
On the other hand (for the right hand side of the RG-2 equation), we find
\begin{align*}
-2Rc(g(t)) - \alpha Rm^2(g(t)) & = \psi(t)^*(-2Rc(g_0) - \alpha Rm^2(g_0)) \\
&=\psi(t)^*(2\nabla^{g_0}\nabla^{g_0}f_0) =\psi(t)^*(L_{\nabla_{g_0}f_0}g_0) \\
&=L_{\psi(t)^*(\nabla_{g_0}f_0)}(\psi_t^* g_0) =L_{\nabla_{\psi(t)^*(g_0)}(f_0 \cdot\psi(t))}(\psi_t^* g_0) \\
&= L_{\nabla_{g(t)}h(t)} g(t)
\end{align*}
We have thus shown that $(M, g(t) =\psi(t)^*g_0, h(t)  = \psi(t)^* (f_0))$ does satisfy the RG-2 flow equation, and is consequently a steady RG-2 gradient soliton.

\subsubsection{2-d Cigar Soliton:}
It of course remains to find solutions to condition \eqref{gradsoliton}. We now show that there is an RG-2 analog of the Ricci flow cigar soliton (known in the physics literature as Witten's black hole) and that its geometry is qualitatively similar to that of the Ricci flow version.

\begin{remark} The set-up in the proof below follows Lemma 2.7 in \cite{CK}. One can also set up the equations for the analog of the 3 dimensional   Bryant soliton, as in \cite{C}, Chapter 3 section 4. One writes the curvature tensor in terms of the Ricci curvatures as in equation  (\ref{3dfixpt}) to calculate the $Rm^2$ term.\end{remark}

\begin{remark}
In the proof of Theorem \ref{cigar}, if one instead takes $K = 0$, one obtains the flat metric. If one assumes that $K < 0$, then the constant $c$ in equation (\ref{f'}) is negative, and the solution of equation (\ref{phi''})
approximates the Ricci flow soliton of negative curvature $g = ds^2 + \psi(s)^2 d\theta^2$, where $\psi(s) = \sqrt{\frac{2}{|c|}} \tan(\sqrt{\frac{|c|}{2}}s).$
\end{remark}
  
\begin{theorem}
\label{cigar}
There exists a unique rotationally symmetric 2-dimensional steady RG-2 gradient soliton of constant positive curvature $K>0.$ 
\end{theorem}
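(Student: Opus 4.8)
The plan is to reduce the soliton equation \eqref{gradsoliton} to a single second-order ODE for a warping function and then analyze it in the phase plane. First I would fix the gauge by writing the rotationally symmetric metric in geodesic polar form $g = ds^2 + \psi(s)^2\,d\theta^2$, with the smooth-tip conditions $\psi(0)=0$ and $\psi'(0)=1$, and take the potential to be rotationally symmetric, $f=f(s)$. In two dimensions the Ricci tensor is $Rc = \kappa\, g$ and, exactly as in Theorem \ref{SolNoDiff}, $Rm^2 = \tfrac12|Rm|^2 g = 2\kappa^2 g$, where $\kappa = -\psi''/\psi$ is the Gauss curvature; hence the entire right-hand side of \eqref{gradsoliton} is a pointwise multiple of $g$, and the soliton equation collapses to
\[
\nabla\nabla f = -(\kappa + \alpha\kappa^2)\,g.
\]

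Because the right-hand side is pure trace, the trace-free part of the Hessian of $f$ must vanish. For a rotationally symmetric $f$ this reads $f'' = (\psi'/\psi)f'$, which integrates to the first integral $f' = c\,\psi$ for a constant $c$ fixed by the tip curvature (evaluating at $s=0$ gives $c = -(K+\alpha K^2)$, so $c<0$ when $K>0$). Substituting $f'=c\psi$ into either diagonal component and using $\kappa=-\psi''/\psi$ yields the quadratic relation $\alpha(\psi'')^2 - \psi\,\psi'' + c\,\psi'\,\psi^2 = 0$. Solving for $\psi''$ and selecting the branch that degenerates to the Ricci cigar equation $\psi''=c\psi'\psi$ as $\alpha\to0$ gives the governing ODE
\[
\psi'' = \frac{\psi}{2\alpha}\bigl(1 - \sqrt{1 - 4\alpha c\,\psi'}\,\bigr).
\]
For $c<0$ and $\psi'>0$ the discriminant $1-4\alpha c\psi' = 1 + 4\alpha|c|\psi'$ exceeds $1$, so the square root is real and $\psi''<0$ throughout.

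For existence and uniqueness I would pass to the phase plane, treating $w=\psi'$ as a function of $\psi$, so that the equation becomes the first-order problem
\[
w\,\frac{dw}{d\psi} = \frac{\psi}{2\alpha}\bigl(1 - \sqrt{1 - 4\alpha c\,w}\,\bigr), \qquad w(0)=1,
\]
to which Picard--Lindel\"of applies directly, giving a unique local solution. Setting $\tau=\psi^2$ converts this into $\frac{d(w^2)}{d\tau} = \tfrac{1}{2\alpha}\bigl(1-\sqrt{1-4\alpha c w}\,\bigr)$, a smooth ODE with $w^2|_{\tau=0}=1$; hence $w^2$, and therefore $w$, is a smooth even function of $\psi$ with $w(0)=1$. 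This is precisely what is needed for the reconstructed profile $\psi(s)$, obtained from $d\psi/ds=w(\psi)$ with $\psi(0)=0$, to be odd and for the metric to close up smoothly at the tip, thereby removing the apparent singularity of the original initial-value problem at $s=0$. Uniqueness of the soliton with prescribed tip curvature $K$ is then immediate from uniqueness of this phase-plane problem, with $f$ recovered by integrating $f'=c\psi$.

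The main obstacle is the global analysis needed to confirm that the solution is a complete cigar rather than, say, a capped-off sphere. Since the right-hand side is negative, $w$ is strictly decreasing from $w(0)=1$; I would first rule out $w$ staying bounded below as $\psi\to\infty$, for in that case $w\,\frac{dw}{d\psi}\le -\,\mathrm{const}\cdot\psi$, which drives $w$ below any positive bound and contradicts positivity. Thus $w$ must vanish at a finite $\psi_\infty$, and near there the expansion $1-\sqrt{1-4\alpha c w}\approx -2\alpha c\,w$ shows that $w$ vanishes linearly, so $s=\int d\psi/w$ diverges logarithmically: the metric is complete on $\mathbb R^2$ and asymptotically cylindrical, i.e. a genuine cigar. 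Finally $\psi>0$ together with $\psi''<0$ gives $\kappa=-\psi''/\psi>0$ everywhere, with $\kappa(0)=K$ and $\kappa\to0$ at infinity, matching the qualitative geometry of the Ricci cigar. The feature with no analog in Ricci flow is the square-root nonlinearity produced by the quadratic $Rm^2$ term, so the bulk of the care goes into the branch selection, the positivity of the discriminant along the whole trajectory, and the linear-vanishing estimate for $w$ that yields completeness.
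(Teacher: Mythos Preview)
Your outline matches the paper's proof: both impose the warped ansatz $g=ds^2+\psi^2\,d\theta^2$ with smooth-tip conditions, extract the first integral $f'=c\psi$, solve the resulting quadratic for $\psi''$ selecting the branch with $\psi''<0$, and then pass to the phase plane (your equation $w\,dw/d\psi=\dots$ is exactly the paper's separable $dv/d\phi$ used to show $\phi$ stays bounded). Your added care with the $\tau=\psi^2$ substitution for smooth closure at the tip and the completeness argument via the logarithmic divergence of $\int d\psi/w$ are refinements along the same line rather than a different route; note only that the paper's proof uses the sign convention of \eqref{solitoneq} (so its $c$ comes out positive), and that your linearization should read $1-\sqrt{1-4\alpha c\,w}\approx 2\alpha c\,w$.
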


\begin{proof}
We seek rotationally symmetric metrics of the form
 $$g = ds^2 + \phi(s)^2 d\theta^2$$ 
 together with functions $f(s)$ which satisfy the initial value 
 soliton equation 
\begin{equation} 
\label{solitoneq}
 Rc(g) + \frac{\alpha}{4} Rm^2(g) = \nabla \nabla f.
\end{equation} 
 Here $\phi(s) > 0$ and is presumed smooth.
In order for the metric to extend smoothly over the origin, we impose the boundary conditions $\phi(0) = 0,$ and $\phi'(0) = 1.$ (The fact that these conditions are necessary and sufficient for smooth extension is proven, e.g., in Lemma 2.10, pg. 29 in \cite{CK}.) We choose the orthonormal frame field 
$e_1 = \frac{\partial}{\partial s}, \ \  e_2 = \frac{1}{\phi(s)}\frac{\partial}{\partial \theta},$ and using moving frames we calculate
the curvature of the metric to be $K = -\frac{\phi''(s)}{\phi(s)}. $ We then obtain the terms on the left hand side of the soliton equation by calculating 
\begin{align*}
Rc_{11} &= Rc_{22} = K, \\
Rm_{11}^2 &= Rm^2_{1221} + Rm^2_{1212} = 2K^2,\\
Rm_{22}^2 &=Rm^2_{2112}+Rm^2_{2121} =2K^2,
\end{align*}
from which we determine that the soliton equation takes the form
\begin{equation}
\label{GenCigSol}
(K+\frac{\alpha}{2}K^2) g = \nabla \nabla f.
\end{equation}

To calculate the Hessian on the right hand side of \eqref{GenCigSol}, we use $\nabla_{e_1}e_1 = 0$ and $\nabla_{e_2} e_2 = -\frac{\phi'(s)}{\phi(s)} e_1.$ Since $g_{ij} = \delta_{ij}$ with respect to the orthonormal frame $\{e_1,e_2\}$, equation (\ref{GenCigSol}) reduces to the two equations
\begin{align}
(K+\frac{\alpha}{2}K^2)g_{11}& = (\nabla \nabla f)_{11} = e_1(e_1 f) -\nabla_{e_1}e_1 f = f''(s),\\
(K+\frac{\alpha}{2}K^2)g_{22}&=(\nabla \nabla f )_{22}= e_2(e_2 f) -\nabla_{e_2}e_2 f = \frac{\phi'(s) f'(s) }{\phi(s)}.
\end{align}
Combining these two equations we obtain the  separable ODE $f''(s) = \frac{\phi'(s)}{\phi(s)} f'(s)$, which can be integrated (with integration constant $c$) to produce 
\begin{equation}
\label{f'}
f'(s) = c \phi(s).
\end{equation} The constant $c$ cannot be zero since $K > 0$ implies $f''(s) >0$, from which we infer (noting that  $\phi>0$) that  $f'$ must have a sign. Since $K(s) = \phi' (s)f' (s)/\phi (s)> 0$, we see that   
$\phi'$ and $f'$ must have the same sign for all $s$. Since $\phi'(0) =1$,  $\phi'$ and $f'$ must be  positive.  It thus follows from (\ref{f'}) that  $c$ must be positive. The soliton equation therefore takes the form 
\begin{equation}
\label{Solitonred}
-\frac{\phi''(s)}{\phi(s)} + \frac{\alpha}{2}(\frac{\phi''(s)}{\phi(s)})^2 = c \phi'(s).
\end{equation}
Equation \eqref{Solitonred} is  quadratic in $\phi''$, so we can solve for the second derivative of $\phi$:
\begin{equation}
\label{phi''}
\phi''= \frac{\phi}{\alpha} (1 - \sqrt{1+2 c \alpha \phi'}).
\end{equation}
Here  we choose the negative root since $ \phi'' < 0.$ 
The associated equation in the Ricci flow case is 
$\psi'' +c\psi \psi' = 0,$  which has the explicit solution 
$\psi(s) = \sqrt{\frac{2}{c}} \tanh(\sqrt{\frac{c}{2}}s)$,
with the  associated curvature $K_{\psi} =c-c\tanh(\sqrt{\frac{c}{2}s})^2. $

In order to see that \eqref{phi''} has a unique solution, 
we set $v:= \frac{d \phi}{ds}$ and rewrite \eqref{phi''} as the following ODE system:
\begin{align}
\label{phisystem}
v' &= \phi'' = \frac{\phi}{\alpha}(1 - \sqrt{1+2 c \alpha \phi'}))\\
\phi' &= v, 
\end{align}
with the initial conditions
$v(0) = 1,  \ \phi(0) = 0.$
We may write this as $(v', \phi') = F(\phi, v) = (\frac{\phi}{\alpha}(1 - \sqrt{1+2 c \alpha v}),v).$

Standard well-posedness theorems for ODE systems (see e.g.\ Thm 20.9 in \cite{Olv} and its references) tell us that so long as the partial derivatives of $F(\phi, v)$ exist and are continuous in a neighborhood of the initial values, then existence and uniqueness of solutions holds for this initial value problem. We readily verify that  $F(\phi, v)$ is $C^1$ for all $v > -1/(2 c \alpha)$. 
Hence the initial value problem has a unique solution near $s=0$, for the stated initial data, $v(0) = 1,  \ \phi(0) = 0.$ Further, one can argue that solutions continue for all $s$. To show this, we first argue that so long as $\phi(s)$ stays bounded, the solution continues, with $v(s)$ decreasing and staying strictly positive. The fact  that $v(s)$ (starting at $v(0)=1)$ is a decreasing function follows immediately from \eqref{phisystem}. Now, let us presume that there is some (first) value $s_1>0$ at which $v(s_1)=0.$ It then  follows from \eqref{phisystem} that  one must have $v'(s_1)=0$. Since $F(\phi, v)$ is $C^1$ in a neighborhood of the point $(\phi_0,0)$ for any $\phi_0>0$, and since it follows that the solution $(\phi(s),v(s))=(\phi_0, 0)$ is the only solution consistent with the initial data $(\phi(s_1),v(s_1))=(\phi_0, 0)$, and finally since this solution is inconsistent with the initial data $(\phi(0),v(0))=(0, 1)$, we have a contradiction. Hence, presuming the boundedness of $\phi$, we see that  there exists a unique solution with $v(s_0)>0$, constantly decreasing.

To argue that $\phi$ remains bounded, we consider the equation 
$\frac{dv}{d\phi} = \frac{\phi}{\alpha v} (1 - \sqrt{1+2 c \alpha v})).$
This is separable, and we can integrate both sides. One can see from the solution that as $v$ decreases to zero, $\phi$ approaches a constant.  Global existence follows.

To better understand the solution $\phi$ of (\ref{phi''}), it is useful to consider the curvature 
\begin{equation}
\label{curvatures}
K= -\frac{\phi''}{\phi} =\frac{1}{\alpha}\sqrt{1+2 c\alpha \phi'}-\frac{1}{\alpha}
\end{equation} at any fixed $s$.  Expanding
$\sqrt{1+2 c \alpha \phi'} - 1$ about  $\alpha = 0,$  
we find that $$K= c\phi' - (\phi')^2 \frac{c^2}{2} \alpha + ....$$ For example  $K(0) \rightarrow c=K_\psi(0)$ as $\alpha \rightarrow 0,$ since $\phi'(0) = 1.$
Furthermore, for any $\alpha$ we have $K(s)\rightarrow 0$ as $s \rightarrow \infty.$ To see that, we recall that  $\phi' $ is positive and strictly decreasing. In fact, $\phi'$ must decrease to zero, since if it were to approach some $\epsilon > 0$, then  $\phi$ would be  increasing and $\phi''$ would be going to zero, so that $-\phi''/\phi$ would go to zero. But by equation (\ref{curvatures}),  $-\phi''/\phi$ would approach a positive constant. Hence, we must have $\phi'$ and $K$ going to zero as $s \rightarrow 0.$ In the following pictures, we compare the RF cigar soliton solution $\psi$ with the RG-2 flow cigar for small $\alpha$ (middle diagram), and for large $\alpha.$

\includegraphics[height=5cm]{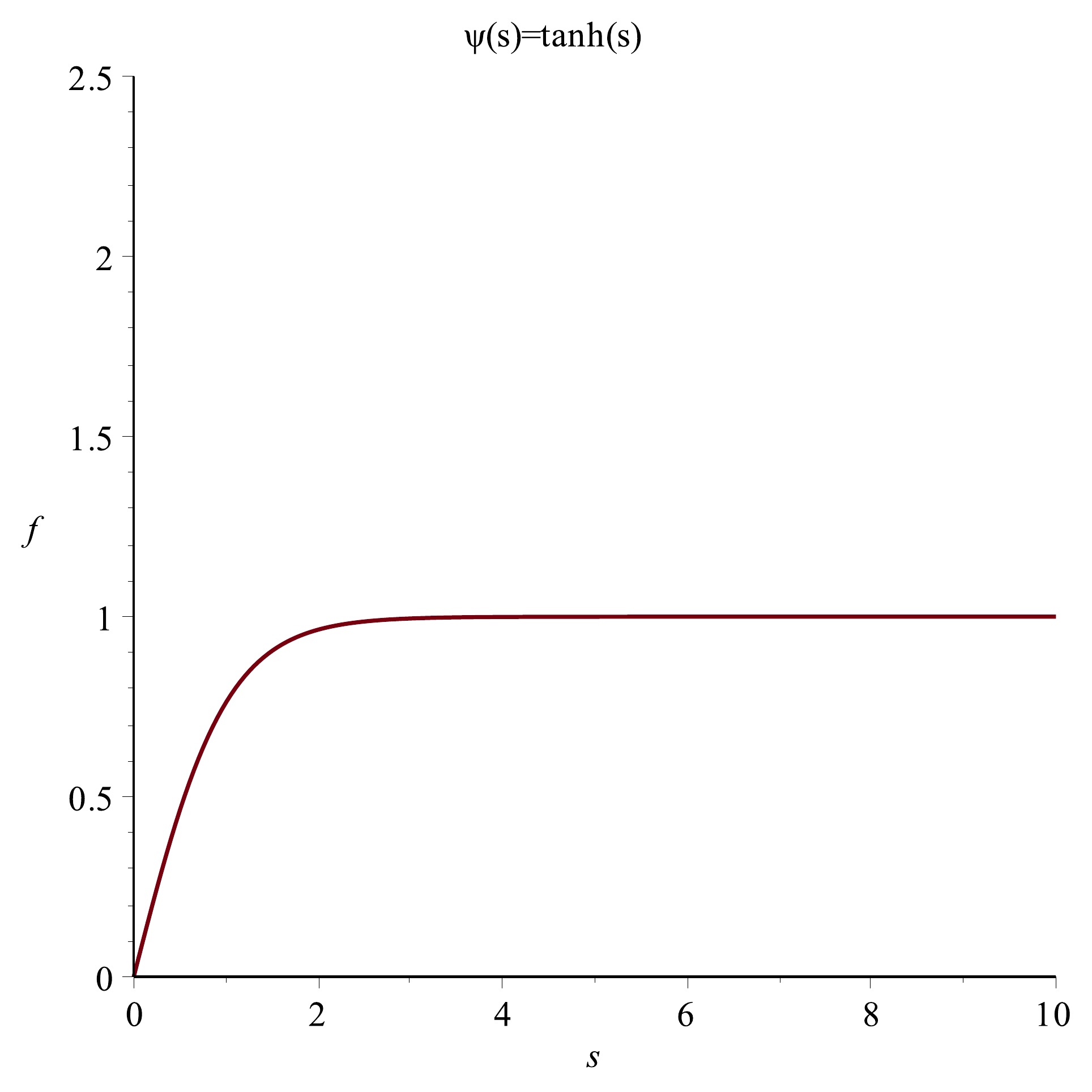}
\includegraphics[height=5cm]{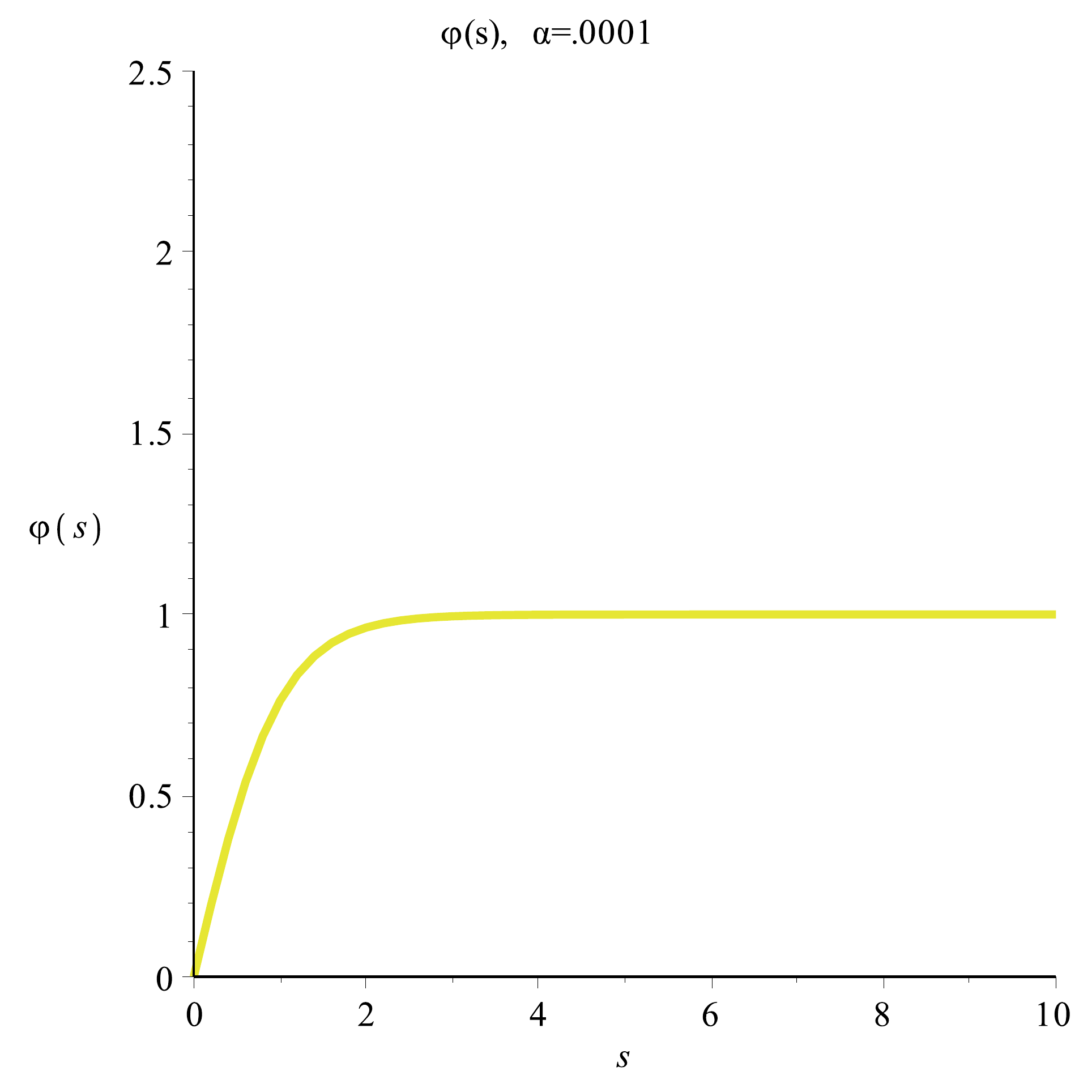}
\includegraphics[height=5cm]{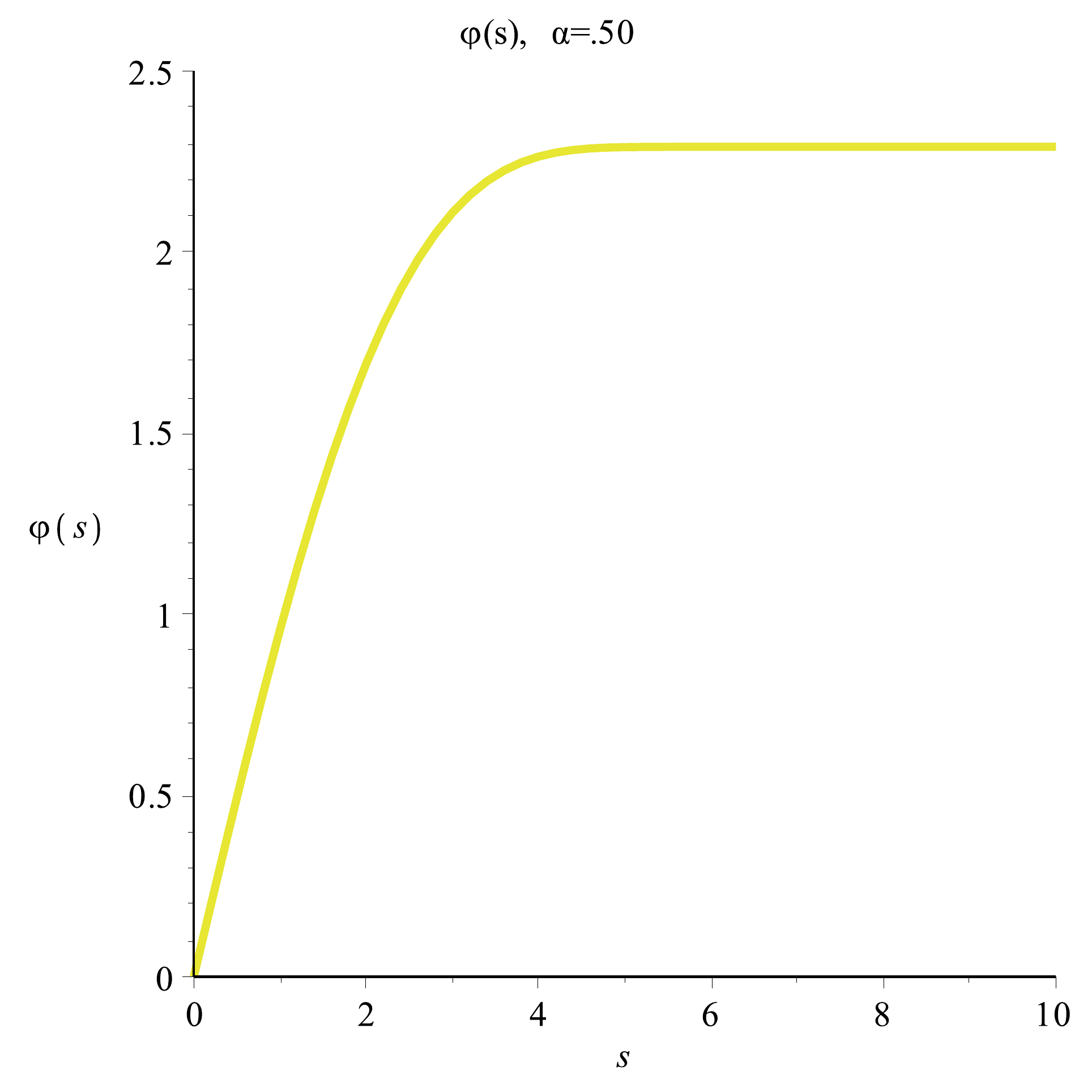}

\end{proof}

\subsection{Gaussian Soliton:}

The canonical metric $g_{Euc}$ in Euclidean space is invariant under rescaling: $c g_{Euc} = \phi^* g_{Euc}$, where $\phi = \sqrt{c} id.$ ($\mathbb R^n, g_{Euc})$ can therefore be considered an expanding, shrinking, or steady soliton for the RG-2 flow.

\section{3-d Locally Homogeneous Spaces}
\label{LocHomog}

As a final example of a class of special solutions of the RG-2 flow, we consider the 3-dimensional locally homogeneous spaces. These are spaces such that  for each pair of points $p$ and $q$, there exist neighborhoods $p \in V_p$ and $q \in V_q$ that are isometric. In this case the spatial independence of the curvature results in the flow being a system of ODEs. The Ricci flow on 3-d locally homogeneous spaces has been well-studied (see e.g. \cite{IJ}, \cite{GP}, \cite{KM}); solutions either develop finite time singularities, or exhibit characteristic ``pancake'' or ``cigar" singularities, where one (resp. two) direction shrinks and the others expand.  In \cite{GGI} we explore the question of whether  for fixed $\alpha$ and for a fixed family of geometries, the RG-2 flow has asymptotic behavior similar to that of the Ricci flow for these spaces. In the cases Sol, SU(2), and SL($2,\mathbb R$), we set two directions equal so that the system is amenable to phase plane analysis. 

The conclusions we reach are consistent with the familiar theme that the behavior depends on the size of $\alpha \times$Curvature if negative curvatures are involved.  More specifically, in the $\mathbb H^3, \mathbb H^2\times \mathbb R,$ Nil, symmetric Sol, and symmetric SL($2,\mathbb R$) cases, one can choose $\alpha$ sufficiently small so that the asymptotic behavior is similar to the Ricci flow, and $\alpha$ sufficiently large so that all directions contract, in contrast to the Ricci flow. 

For every fixed $\alpha$, one finds a curve that partitions the phase plane into two regions. Solutions whose initial conditions start in one of the regions are immortal and demonstrate either cigar or pancake asymptotics consistent with the behavior of the Ricci flow; solutions whose initial conditions start in the other develop shrinker asymptotics with singularity developing in finite time. So, for every positive value of $\alpha$, no matter how small, one can find initial data such that dichotomous behavior occurs for the geometries with negative curvatures.

\section{Open Problems}
\label{Open}

There are many interesting open problems concerning  the RG-2 flow:

\begin{enumerate}

\item Variational formulation: Based on physical results the RG-2 flow should be a gradient flow (see \cite{P} and \cite{OSW} for the Ricci flow calculation). A natural starting point would be Perelman's functional for Ricci flow, perhaps combined with the functional $\int_M |Rm|^2 d\mu$.

\item Preserved curvature conditions: Little is known about what curvature conditions are preserved along the flow. In particular it would be useful to find classes of initial data (subspaces of the space of metrics), for which the parabolicity condition $1+\alpha K_{ij} > 0$  is preserved. 

\item Singularities: We have seen that the RG-2 flow is not scale invariant, which is problematic in formulating blow-up arguments to study singularities. It would be useful to determine if blowup arguments in any form are effective for analyzing RG-2 singularities.

\item Harnack inequality: The derivation of a gradient Harnack inequality for RG-2 flow could be guided by our understanding of solitons.  

\item Additional fields: For physical reasons one might be motivated to add terms to the nonlinear sigma model action involving an antisymmetric tensor $B_{ij}$ called a B-field, or a scalar function called a dilaton. The Ricci flow coupled with additional fields has been studied by Streets in \cite{St} and Oliynyk/Suneeta/Woolgar in \cite{OSW}. 

\item Physical applications: It has been postulated that the RG-2 flow should control the existence of a continuum limit,  but we have not found a proof of this in the literature.  Briefly, one of the mathematically problematic aspects of this quantum field theory is that one needs a measure on the infinite dimensional space of all functions $\Sigma \rightarrow M$. One can``regularize" by approximating the domain with a lattice $\Sigma = \mathbb Z_N \times \mathbb Z_N$, so that the space of maps is $M^{N^2}$. The continuum limit is the limit as $N \rightarrow \infty.$ 

\item Mathematical rigor: As we note  in the introduction, the derivation of the Renormalization Group flow, and its approximation in the perturbative realm,  is not rigorous mathematically. In particular, an understanding of the measure described in problem (6) above is a crucial (difficult) outstanding problem. 

\item Geometric interest: The increased geometric complexity of the RG-2 flow compared with the Ricci flow (including the presence of   additional fixed points)  may be of use in the analysis of the geometry of 3-manifolds. 
\end{enumerate}


\begin{thebibliography}{10}

\bibitem{Ber} M. Berger, {\it Quelques formules de variation pour une structure riemannienne}. Ann. Sci. de l'E.N.S. $4^e$ serie {\bf 3} no. 3,  (1970) 285-294.
\bibitem{B} A. Besse, {\it Einstein manifolds}.  Ergebisse der Mathematik und ihrer Grenzgebiete {\bf 10}, Springer Verlag, Berlin, (1987).
\bibitem{BW} C. Bohm, B. Wilking, {\it Nonnegatively curved manifolds with finite fundamental groups admit metrics with positive Ricci curvature}, Geom. Funct. Anal. {\bf 17}, (2007) 665-681
\bibitem{BrSc} S. Brendle, R. Schoen, {\it Curvature, sphere theorems, and the Ricci flow}, Bull. Amer. Math. Soc. (N.S.) {\bf 48}, (2011) 1-32.
\bibitem{Carf} M. Carfora, {\it Renormalization Group and the Ricci flow}. arXiv:hep-th/1001.3595v1, (2010).
\bibitem{C} B. Chow, et. al. {\it The Ricci Flow: Techniques and Applications, Part I: Geometric Aspects}. Mathematical Surveys and Monographs {\bf 135}, American Mathematical Society, Providence, RI,  (2007).
\bibitem{Cetal} B. Chow, et. al. {\it The Ricci Flow: Techniques and Applications, Part II: Analytic Aspects}. Mathematical Surveys and Monographs {\bf 144}, American Mathematical Society, Providence, RI,  (2008).
\bibitem{Cetala} B. Chow, et. al. {\it The Ricci Flow: Techniques and Applications, Part III: Geometric-Analytic Aspects}. Mathematical Surveys and Monographs {\bf 163}, American Mathematical Society, Providence, RI,  (2010).
\bibitem{CK} B. Chow, D. Knopf {\it The Ricci Flow: an introduction}. Mathematical Surveys and Monographs  {\bf 110}, American Mathematical Society, Providence, RI, (2004).
\bibitem{CLN} B. Chow, P. Lu, L. Ni, {\it Hamilton's Ricci flow}. Graduate Studies in Mathematics  {\bf 77}, American Mathematical Society, Providence, RI,  (2006). 
\bibitem{CG} R. Corless, G. Gonnet, D. Hare, D. Jeffrey, D. Knuth, {\it On the Lambert W function}. https://cs.uwaterloo.ca/research/tr/1993/03/W.pdf, (1993).
\bibitem{CM} L. Cremaschi, C. Mantegazza,  {\it Short-time existence of the second order renormalization group flow in dimension three}. arXiv: math-DG/1306.1721, (2013).
\bibitem{D} B. Delamotte, {\it A hint of renormalization}. arxiv:hep-th/0212049v3, (2003).
\bibitem{F1} D. Friedan, {\it Nonlinear models in $2 + \epsilon$ dimensions}, Ann. Phys. {\bf 163}, (1985) 318-419. 
\bibitem{F2} D. Friedan, {\it Introduction to the Renormalization Group flow}, BIRS Workshop: Geometric Flows in Mathematics and Physics, Banff,  (2008).
\bibitem{G} K. Gawedzki, {\it Lectures on conformal field theory}. Quantum Fields and Strings: A Course For Mathematicians (P. Deligne, P. Etingof, D.S. Freed, L. Jeffrey, D. Kazhdan, J. Morgan, D.R. Morrison and E. Witten, eds.),  American Mathematical Society, Providence, RI (1999).
\bibitem{GL} M. Gell-Mann, M. Levy, {\it The axial vector current in beta decay}. Il Nuovo Cimento {\bf 16},  Italian Physical Society, (1960) 705-726
\bibitem{GGI} K. Gimre, C. Guenther, J. Isenberg, {\it Second-order Renormalization Group flow of three-dimensional homogeneous geometries}. Comm. Anal. Geom. {\bf 21} no. 2, (2013) 435-467.
\bibitem{GGIa} K. Gimre, C. Guenther, J. Isenberg, {\it Short-time existence of the second-order Renormalization Group Flow in n-dimensions}. Preprint.
\bibitem{GP} D. Glickenstein, T. Payne, {\it Ricci flow on three-dimensional, unimodular metric Lie algebras}. Comm. Anal. Geom. {\bf 18} no. 5,  (2010) 927-962.
\bibitem{GO} C. Guenther, T. Oliynyk, {\it Stability of the (two-loop) Renormalization Group flow for nonlinear sigma models}. Lett. Math. Phys. {\bf 84}, (2008) 149-157.
\bibitem{IAS} Institute for Advanced Study, School of Mathematics, http://www.math.ias.edu/qft (2011).
\bibitem{IJ} J. Isenberg, M. Jackson, {\it Ricci flow of locally homogeneous geometries on closed manifolds}. J. Diff. Geom. {\bf 35}, (1992) 723-741.
\bibitem{KM} D. Knopf, K. McLeod, {\it Quasi-convergence of model geometries under the Ricci flow}. Comm. Anal. Geom. {\bf 9} no. 4, (2001) 879-919.
\bibitem{KN} N. Kowalski, {\it On Ricci eigenvalues of locally homogeneous Riemannian 3-manifolds}. Geometriae dedicata {\bf 62}, (1996) 65-72.
\bibitem{O} T. Oliynyk, {\it The second-order renormalization group flow for nonlinear sigma models in two dimensions}. Class. Quant. Grav. {\bf 26}, (2009), 8pp.
\bibitem{OSW} T. Oliynyk, E. Woolgar, V. Suneeta, {\it A gradient flow for worldsheet nonlinear sigma models}. Nucl. Phys. B. {\bf 739}, (2006) 441-458.
\bibitem{Olv} P. Olver, {\it Applied Mathematics Lecture Notes}. http://www.math.umn.edu/~olver/appl.html, (2012).
\bibitem{Per} G. Perelman {\it The entropy formula for the Ricci flow and its geometric applications}. arXiv:math-DG/0211159.
\bibitem{P} P. Petersen, {\it Riemannian Geometry}, Graduate Texts in Mathematics  {\bf 171}, Springer Verlag, NY, (1998).
\bibitem{S} I. Singer, {\it Infinitesimally homogeneous spaces}. Comm. Pure App. Math. {\bf 13} no. 6,  (1960) 85-697.
\bibitem{St}J. Streets, {\it Singularities of renormalization group flows}. J. Geom. Phys. {\bf 59} no. 1,  (2009) 8-16.
\bibitem{Y} K. Yamato, {\it A characterization of locally homogeneous Riemann manifolds of dimension 3}, Nagoya Math. J. {\bf 123} (1991), 77-90.

\end{thebibliography}
\end{document}